\newlist{steps}{enumerate}{1}
\setlist[steps, 1]{label = Step \arabic*:}
\DeclareRobustCommand\widecheck[1]{{\mathpalette\@widecheck{#1}}}
\def\@widecheck#1#2{%
   \setbox\z@\hbox{\m@th$#1#2$}%
   \setbox\tw@\hbox{\m@th$#1%
      \widehat{%
         \vrule\@width\z@\@height\ht\z@
         \vrule\@height\z@\@width\wd\z@}$}%
   \dp\tw@-\ht\z@
   \@tempdima\ht\z@ \advance\@tempdima2\ht\tw@ \divide\@tempdima\thr@@
   \setbox\tw@\hbox{%
      \raise\@tempdima\hbox{\scalebox{1}[-1]{\lower\@tempdima\box\tw@}}}%
   {\ooalign{\box\tw@ \cr \box\z@}}}
\theoremstyle{plain}
\newtheorem{thm}{Theorem}[section]
\crefname{thm}{Theorem}{Theorems}
\Crefname{thm}{Theorem}{Theorems}
\newtheorem{prop}[thm]{Proposition}
\crefname{prop}{Proposition}{Propositions}
\Crefname{prop}{Proposition}{Propositions}
\crefname{lem}{Lemma}{Lemmas}
\Crefname{lem}{Lemma}{Lemmas}
\newtheorem{cor}[thm]{Corollary}
\crefname{cor}{Corollary}{Corollaries}
\Crefname{cor}{Corollary}{Corollaries}
\crefname{claim}{Claim}{Claims}
\Crefname{claim}{Claim}{Claims}
\crefname{property}{Property}{Properties}
\Crefname{property}{Property}{Properties}
\crefname{problem}{Problem}{Problems}
\Crefname{problem}{Problem}{Problems}
\crefname{ques}{Question}{Questions}
\Crefname{ques}{Question}{Questions}
\theoremstyle{definition}
\crefname{defn}{Definition}{Definitions}
\Crefname{defn}{Definition}{Definitions}
\crefname{conj}{Conjecture}{Conjectures}
\Crefname{conj}{Conjecture}{Conjectures}
\crefname{notation}{Notation}{Notations}
\Crefname{notation}{Notation}{Notations}
\crefname{convention}{Convention}{Conventions}
\Crefname{convention}{Convention}{Conventions}
\crefname{cond}{Condition}{Conditions}
\Crefname{cond}{Condition}{Conditions}
\crefname{assum}{Assumption}{Assumptions}
\Crefname{assum}{Assumption}{Assumptions}
\crefname{conj}{Conjecture}{Conjectures}
\Crefname{conj}{Conjecture}{Conjectures}
\theoremstyle{remark}
\newtheorem{rem}[thm]{Remark}
\crefname{rem}{Remark}{Remarks}
\Crefname{rem}{Remark}{Remarks}
\crefname{ex}{Example}{Examples}
\Crefname{ex}{Example}{Examples}
\crefname{section}{Section}{Sections}
\Crefname{section}{Section}{Sections}
\crefname{subsection}{Subsection}{Subsections}
\Crefname{subsection}{Subsection}{Subsections}
\crefname{figure}{Figure}{Figures}
\Crefname{figure}{Figure}{Figures}
\newcommand{\Q}{\mathbb{Q}}
\newcommand{\R}{\mathbb R}
\newcommand{\ctext}[1]{\raise0.2ex\hbox{\textcircled{\scriptsize{#1}}}}
\def\ker{\operatorname{Ker}}
\newcommand{\mbar}[1]{{\ooalign{\hfil#1\hfil\crcr\raise.167ex\hbox{--}}}}
\title[BPS type inequalities for $s^\#$]{Bennequin--Plamenevskaya--Shumakovitch Type Inequalities for Kronheimer--Mrowka's Concordance Invariant}
\author{Nobuo Iida}
\address{Tokyo Institute of Technology, Ookayama, Meguro-ku, Tokyo}
\email{iida.n.ad@m.titech.ac.jp}
\begin{document}

\begin{abstract}
We give Bennequin--Plamenevskaya--Shumakovitch type lower bounds for the concordance invariant $s^\#$ introduced by
Kronheimer and Mrowka.
The proof is a consequence of computations for torus knots and the cobordism inequality of $s^\#$ due to Gong, combined with well-known arguments used for slice-torus invariants.
\end{abstract}
\maketitle
\tableofcontents

\section{Introduction}
As a gauge theory analogue of Rasmussen's invariant $s$ from Khovanov homology \cite{Ras}, Kronheimer and Mrowka introduced a concordance invariant $s^\#$ from singular instanton Floer theory \cite{KMRas}.
The two invariants $s$ and $s^\#$ have similarities and differences.
\par
Both of these invariants give lower bounds for the 4-ball genus $g_4(K)$ for any knot $K \subset S^3$;
\begin{equation}\label{4-ball}
s(K), s^\#(K) \leq 2g_4(K).
\end{equation}
\par
Gong \cite{Gong} gave examples of knots with $s^\# \neq s$ and moreover showed that  $s^\#$ is not additive under connected sum of knots.
\par
The aim of this paper is to study similarities and differences between $s$ and $s^\#$ from the viewpoint of contact geometry. 
A contact structure on an oriented 3-manifold $Y$ is an oriented 2-plane field $\xi$ satisfying 
\[
\xi=\ker \lambda, \quad \lambda \wedge d\lambda>0
\]
for some 1-form $\lambda$ on $Y$. 
The 3-sphere $S^3$ has a standard contact structure $\xi_{std}$.
If we regard $S^3$ as the one point compactification of $\R^3$ with the standard coordinates $(x, y, z)$, this is given by $\xi_{std}=\ker (dz+x dy)$ on $\R^3$.
\par
In a contact 3-manifold, 
there are two distinguished classes of knots.
One is Legendrian knots, which are everywhere tangent to the plane field $\xi$, and the other is transverse knots, which are everywhere transverse to the plane field.
For a survey, see \cite{Etn} by Etnyre, for example.
In the standard contact 3-sphere $(S^3, \xi_{std})$, 
a transverse knot $\mathcal{T}$ has an odd-integer-valued invariants $sl(\mathcal{T})$ called the self-linking number.  A Legendrian knot $\mathcal{L}$ has two integer-valued invariants $tb(\mathcal{L})$ and $rot(\mathcal{L})$ called the Thurston--Bennequin number and the rotation number, respectively.
These invariants are called classical invariants for transverse knots and Legendrian knots.
\par
Let $\mathcal{T}, \mathcal{L} \subset (S^3, \xi_{std})$ be a transverse knot and a Legendrian knot respectively.
Plamenevskaya \cite{Pla} and Shumakovitch \cite{Shu} independently proved that
\[
sl(\mathcal{T}) \leq s(\mathcal{T}) -1
\]
and 
\[
tb(\mathcal{L})+|rot(\mathcal{L})| \leq s(\mathcal{L})-1
\]
hold.
Combined with the 4-ball genus bound for Rasmussen invariant $s$ \eqref{4-ball}, 
these recover Rudolph's celebrated slice-Bennequin inequalities \cite{Rud}
\[
sl(\mathcal{T}) \leq 2g_4(\mathcal{T})-1
\]
and
\[
tb(\mathcal{L})+|rot(\mathcal{L})| \leq 2g_4(\mathcal{L})-1.
\]
\par
As an instanoton analogue of Plamenevskaya--Shumakovitch inequalities, one can ask whether $s^\#$ has similar lower bounds in terms of classical invariants of transverse and Legendrian knots.

We will show the following in this paper.
\begin{thm}(Bennequin--Plamenevskaya--Shumakovitch type inequalities for $s^{\#}$).\label{main}
Let $\mathcal{T}$ and $\mathcal{L}$ be a transverse knot and a Legendrian knot in the standard contact 3-sphere $(S^3, \xi_{std})$, respectively.
Then we have
\[
sl(\mathcal{T}) \leq s^{\#}(\mathcal{T}) 
\]
and 
\[
tb(\mathcal{L})+|rot(\mathcal{L})| \leq s^{\#}(\mathcal{L}).
\]
\end{thm}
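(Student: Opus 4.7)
The plan is to first deduce the Legendrian inequality from the transverse one, and then establish the transverse inequality by the standard slice-torus-invariant template, using Gong's cobordism inequality for $s^\#$ together with the paper's computations of $s^\#$ on positive torus knots.

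For the Legendrian-to-transverse reduction, I use the classical fact that the positive and negative transverse push-offs $\mathcal{T}_\pm$ of a Legendrian knot $\mathcal{L}$ are smoothly isotopic to $\mathcal{L}$ and satisfy $sl(\mathcal{T}_\pm) = tb(\mathcal{L}) \mp rot(\mathcal{L})$. Granted the transverse inequality $sl(\mathcal{T}) \leq s^\#(\mathcal{T})$, applying it to both push-offs yields $tb(\mathcal{L}) \mp rot(\mathcal{L}) \leq s^\#(\mathcal{L})$, and hence $tb(\mathcal{L}) + |rot(\mathcal{L})| \leq s^\#(\mathcal{L})$.

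For the transverse case, I would represent $\mathcal{T}$ as the closure $\hat\beta$ of an $n$-braid $\beta$ with $c_+$ positive and $c_-$ negative crossings, so that $sl(\mathcal{T}) = c_+ - c_- - n$. First, changing each negative crossing of $\beta$ to a positive one can be realized by a cobordism of genus at most one per crossing, producing a cobordism of genus at most $c_-$ from $\hat\beta$ to the positive braid closure $\hat{\beta_+}$. Gong's cobordism inequality then gives $s^\#(\hat\beta) \geq s^\#(\hat{\beta_+}) - 2c_-$. Second, I would bound $s^\#(\hat{\beta_+})$ from below: the closure $\hat{\beta_+}$ is strongly quasipositive with Seifert genus $(c_+ + c_- - n + 1)/2$, which coincides with its slice genus by Rudolph's theorem. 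The matching lower bound for $s^\#$ should be obtained by constructing an explicit positive cobordism from a suitably chosen positive torus knot $T(p,q)$ to $\hat{\beta_+}$, and then invoking the value $s^\#(T(p,q)) = (p-1)(q-1)$ together with Gong's cobordism inequality. Combining the two estimates yields $s^\#(\mathcal{T}) \geq (c_+ + c_- - n) - 2c_- = c_+ - c_- - n = sl(\mathcal{T})$.

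The most delicate step is the lower bound on $s^\#$ for the positive braid closure $\hat{\beta_+}$. For a genuine slice-torus invariant, this follows essentially for free from Livingston's observation that such invariants agree with $2g_4$ on strongly quasipositive knots. However, $s^\#$ is not additive under connected sum (per Gong) and is therefore not a slice-torus invariant in the strict sense, so the abstract slice-torus machinery is not available. The required lower bound must instead be extracted directly from Gong's cobordism inequality by exhibiting an explicit positive cobordism from a large enough positive torus knot to $\hat{\beta_+}$ of controlled genus, and this construction is the main technical ingredient of the argument.
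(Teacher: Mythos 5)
Your overall strategy is exactly the paper's: reduce the Legendrian case to the transverse case via push-offs, pass to a braid closure and Bennequin's formula $sl(\hat\beta)=x_+-x_--n$, remove negative crossings at a cost of $2$ per crossing via Gong's cobordism inequality applied to genus-one crossing-change cobordisms, and bound $s^\#$ of a positive braid closure from below by a cobordism to a large positive torus knot (the Livingston--Lewark construction, which the paper simply cites; it is not a new technical ingredient). Your remark that $s^\#$ is not a slice-torus invariant, so the abstract machinery cannot be invoked and Gong's inequality must be used directly, also matches the paper's framing.

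However, there is a genuine error in the one numerical input that drives the whole theorem: you invoke $s^\#(T(p,q))=(p-1)(q-1)$, whereas Gong's computation (equation \eqref{torus} in the paper) gives $s^\#(T_{p,q})=2g_4(T_{p,q})-1=(p-1)(q-1)-1$. This off-by-one is not cosmetic --- it is precisely why the theorem reads $sl(\mathcal{T})\leq s^\#(\mathcal{T})$ rather than $sl(\mathcal{T})\leq s^\#(\mathcal{T})-1$ as for Rasmussen's $s$. With the correct value, the cobordism $\Sigma:\hat\beta_+\to T(n,l)$ with $\chi(\Sigma)=x_++l-ln$ yields $s^\#(\hat\beta_+)\geq x_+-n$ (not $2g(\hat\beta_+)=x_+-n+1$, which is what your appeal to Rudolph's theorem and a ``matching lower bound'' suggests you expect, and which is false already for the trefoil where $s^\#=sl=1$). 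Your final arithmetic happens to use the bound $s^\#(\hat\beta_+)\geq c_++c_--n$, which is the correct one, so the conclusion survives, but as written the intermediate claims are inconsistent with it and with each other. One further small point: to extract a lower bound on $s^\#(\hat\beta_+)$ from Gong's inequality $s^\#(L_2)-s^\#(L_1)\leq-\chi(\Sigma)+|L_1|-|L_2|$, the cobordism must be read in the direction $\hat\beta_+\to T(n,l)$ (or one must note, as in the paper's proof of the crossing-change corollary, that the hypothesis is symmetric for connected cobordisms between knots); oriented as you state it, from $T(p,q)$ to $\hat\beta_+$, the inequality gives an upper bound instead.
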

\begin{rem}\label{remark}
The trefoil with suitable transverse and Legendrian representation gives equalities, so these inequalities cannot be improved by adding constants.
We can see this from the computation for torus knots \cite{Gong} 
\begin{equation}\label{torus}
s^{\#}(T_{p, q})=2g_4(T_{p, q})-1
=s(T_{p, q})-1
=\overline{sl}(T_{p, q})=(p-1)(q-1)-1.
\end{equation}
Here,  $(p, q)$ is  any pair of coprime positive integers with $p, q>1$.
Recall also that for a knot $K \subset S^3$, its maximal self-linking number $\overline{sl}(K)$ is defined to be the maximum of $sl$ among all the transverse representatives of $K$. 
The value of $g_4(T_{p, q})$ was originally determined by Kronheimer and Mrowka using singular instantons \cite{Emb1, Emb2}, which is nothing but the proof the Milnor conjecture. 
\end{rem}

\section*{Acknowledgement}
The author thanks Aliakbar Daemi, Hisaaki Endo, Hayato Imori, Tsukasa Isoshima, Itsuki Sato, Kouki Sato, Christopher Scaduto, and Masaki Taniguchi for helpful comments.
He also thanks the authors of \cite{DISST} pointing out that an alternative proof of \cref{main} can be obtained as an application of the invariant $\tilde{s}$ that they developed.
The author is supported by JSPS KAKENHI Grant Number 22J00407. 
\section{Background}
As a gauge theory analogue of Rasmussen's invariant $s$ from Khovanov homology \cite{Ras}, Kronheimer and Mrowka introduced a concordance invariant $s^\#$ using singular instanton Floer theory \cite{KMRas}.
Later, $s^\#$ was extended to an invariant for links by Gong \cite{Gong}.
Only the $s^\#$ for knots is needed for our proof of \cref{main}, but we review her extension to links.
Though we will not discuss further,  we remark that 
she also refined $s^{\#}$ into $2^{l}$ invariants $\{s^\#_I\}_{I \in \{\pm 1\}^{l}}$ for a link $L$ with $l$ components.
We also remark that $s^{\#}$ is studied from the viewpoint of equivariant singular instanton Floer theory in \cite{DISST}.

\par
Let $L \subset S^3$ be a link.
Kronheimer and Mrowka \cite{yaft, unknot} constructed a $\Q[u, u^{-1}]$ module
\[
I^\#(L)=I^\#(L; \Gamma),
\]
as one variant of singular instanton Floer homologies.
Here,  $\Gamma$ is a local coefficient system on the quotient configuration space, constructed by looking at the product of holonomies of connections along each component of links. We will omit $\Gamma$ from our notation.
 The formal parameter $u$ keeps track of the holonomy.
 We denote  the torsion-free part of $I^\#(L)$ by
 \[
 I'(L)=I^\#(L)/\text{torsion}.
 \]
 
 For a knot $K$ in $S^3$, the Floer group with local coefficient $I^\#(K)$ has a mod 4 grading and its torsion-free part $I'(K)$ has one generator in each of degrees $1$ and $-1$ mod 4.
 For the unknot $U$, 
 $I^{\#}(U)$ is a free $\Q[u, u^{-1}]$ module of rank 2.
 We denote the generators in degrees $1$ and $-1$ by $u_+$ and $u_-$, respectively.
 We regard the completion of $\Q[u, u^{-1}]$ at $u=1$ as $\Q[[\lambda]]$, where
$\lambda^2=u^{-1}(u-1)^2$.
From now on, we will regard Floer groups $I^\#(L)$ and $I'(L)$ as $\Q[[\lambda]]$ modules.

 \par
 In this paper, 
 a link cobordism $\Sigma : L_0 \to L_1$ is always assumed to be oriented and embedded in $[0, 1]\times S^3$, though cobordism maps for the singular instanton Floer  theory are defined for more general immersed cobordisms in more general 4-manifolds.
Such a link cobordism induces a $\Q[[\lambda]]$ module homomorphism
\[
\psi^\#(\Sigma): I^\#(L_0)\to I^\#(L_1).
\]

We denote the induced map on the torsion free parts by the same notation
\[
\psi^\#(\Sigma): I'(L_0)\to I'(L_1).
\]
For a link $L$  with $l$ components in $S^3$, take a connected cobordism from the unknot $\Sigma: U \to L$.
Define  $m_+(\Sigma), m_-(\Sigma)$ as the largest integer for which we can write
\[
\psi^\#(\Sigma)(u_\pm)=\lambda^{m_\pm (\Sigma)}v 
\]
for an element $v \in I'(L)$.
Define
\[
s^\#_\pm(L)=
\begin{cases}
g(\Sigma)-m_\pm(\Sigma) \quad &\text{if }\Sigma \text{ has even genus }\\
g(\Sigma)-m_\mp(\Sigma)\pm 1&\text{else}.
\end{cases}
\]
By combining these, $s^{\#}(L)$ is defined by 
\[
s^{\#}(L)=s^\#_+(L)+s^\#_-(L).
\]
Gong proved that these $s^\#_+(L), s^\#_-(L), s^{\#}(L)$  do not depend on the choice of the cobordism $\Sigma$ and give rise to link invariants.
 
\par

The following cobordism inequality is proved by Gong
\cite[Theorem 1.4]{Gong}.
We state it only for the embedded cobordisms, though she proved it more generally for immersed cobordisms.
\begin{thm}(Gong's cobordism inequality).\label{cobordism}
Let $L_1, L_2 \subset S^3$ be links and $\Sigma: L_1 \to L_2$ be a link cobordism such that every component of $\Sigma$ has non-trivial boundary in $L_1$.
Then
\[
s^{\#}(L_2)-s^{\#}(L_1)\leq -\chi(\Sigma)+|L_1|-|L_2|
\]
holds.
\end{thm}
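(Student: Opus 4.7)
The plan is to reduce the cobordism inequality to a statement about $\lambda$-divisibility of cobordism maps, by concatenating the given cobordism $\Sigma$ with a connected cobordism from the unknot and then invoking the well-definedness of $s^\#_{\pm}$ and the functoriality of $\psi^\#$.

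First I would choose an arbitrary connected link cobordism $\Sigma_1 : U \to L_1$ from the unknot to $L_1$, which exists for any link. Then I would form the concatenation $\Sigma_2 := \Sigma_1 \cup_{L_1} \Sigma : U \to L_2$. Here the hypothesis that every component of $\Sigma$ has non-trivial boundary in $L_1$ is essential: since $\Sigma_1$ is connected and has $L_1$ as its entire upper boundary, every component of $\Sigma$ becomes glued to $\Sigma_1$, so $\Sigma_2$ is connected. Using $\chi(\Sigma_i) = 2 - 2g(\Sigma_i) - (1 + |L_i|)$ together with the additivity $\chi(\Sigma_2) = \chi(\Sigma_1) + \chi(\Sigma)$ (as $\chi(L_1) = 0$), one gets
\[
2\bigl(g(\Sigma_2) - g(\Sigma_1)\bigr) = -\chi(\Sigma) + |L_1| - |L_2|,
\]
so the claimed right-hand side is exactly $2(g(\Sigma_2) - g(\Sigma_1))$, and it suffices to show $s^\#(L_2) - s^\#(L_1) \leq 2\bigl(g(\Sigma_2) - g(\Sigma_1)\bigr)$.

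Next I would invoke functoriality $\psi^\#(\Sigma_2) = \psi^\#(\Sigma)\circ \psi^\#(\Sigma_1)$ as $\Q[[\lambda]]$-module maps on $I'$. Writing $\psi^\#(\Sigma_1)(u_\pm) = \lambda^{m_\pm(\Sigma_1)} v_\pm$ in $I'(L_1)$ and applying the $\Q[[\lambda]]$-linear map $\psi^\#(\Sigma)$ yields
\[
\psi^\#(\Sigma_2)(u_\pm) = \lambda^{m_\pm(\Sigma_1)}\, \psi^\#(\Sigma)(v_\pm),
\]
so $m_\pm(\Sigma_2) \geq m_\pm(\Sigma_1)$. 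The key combinatorial observation is that in both parity cases the definition of $s^\#_\pm$ collapses upon summation: in the even-genus case $s^\#_\pm = g - m_\pm$, while in the odd-genus case $s^\#_\pm = g - m_\mp \pm 1$, and the $\pm 1$'s cancel. Hence, for any connected cobordism $\Sigma_\star : U \to L$ one has the uniform formula
\[
s^\#(L) = 2g(\Sigma_\star) - m_+(\Sigma_\star) - m_-(\Sigma_\star),
\]
where well-definedness of the right-hand side is exactly the prior Gong-type result that $s^\#_{\pm}$ are link invariants. Combining this with the divisibility inequality gives
\[
s^\#(L_2) = 2g(\Sigma_2) - m_+(\Sigma_2) - m_-(\Sigma_2) \leq 2g(\Sigma_2) - m_+(\Sigma_1) - m_-(\Sigma_1) = s^\#(L_1) + 2\bigl(g(\Sigma_2) - g(\Sigma_1)\bigr),
\]
which is the desired inequality.

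The main obstacle I expect is not conceptual but bookkeeping. One should verify carefully that (i) $\psi^\#$ is genuinely natural under concatenation as a map of $\Q[[\lambda]]$-modules on the quotients $I'$, with the local coefficient system $\Gamma$ behaving correctly when one glues $\Sigma_1$ and $\Sigma$ along $L_1$; and (ii) the parity case analysis for $s^\#_\pm$ is handled without subtle sign issues. If one wanted to bypass the parity bookkeeping, one could instead modify $\Sigma_1$ by a single stabilization (adding an unknotted handle) to arrange $g(\Sigma_1)$ and $g(\Sigma_2)$ to have any desired common parity, which is harmless since the well-definedness of $s^\#_\pm$ allows arbitrary choice of the connected cobordism from $U$.
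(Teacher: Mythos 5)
The paper offers no proof of \cref{cobordism} at all: it is imported verbatim from Gong's work (\cite[Theorem 1.4]{Gong}), so there is no in-paper argument to compare against, and your task was really to reconstruct Gong's proof from the definitions quoted in Section 2. Judged on its own terms, your argument is correct and is essentially the standard one: concatenating a connected $\Sigma_1 : U \to L_1$ with $\Sigma$ produces a connected $\Sigma_2$ precisely because every component of $\Sigma$ meets $L_1$; the Euler characteristic bookkeeping correctly converts $-\chi(\Sigma)+|L_1|-|L_2|$ into $2\bigl(g(\Sigma_2)-g(\Sigma_1)\bigr)$; the parity-independent identity $s^{\#}(L)=2g(\Sigma_\star)-m_+(\Sigma_\star)-m_-(\Sigma_\star)$ does hold in both cases of the definition (the $\pm 1$'s cancel and the swap $m_+\leftrightarrow m_-$ is invisible after summing); and $\Q[[\lambda]]$-linearity gives $m_\pm(\Sigma_2)\geq m_\pm(\Sigma_1)$. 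Two ingredients should be flagged explicitly as imported rather than merely as ``bookkeeping'': (i) the composition law $\psi^{\#}(\Sigma_2)=\psi^{\#}(\Sigma)\circ\psi^{\#}(\Sigma_1)$ for the maps with local coefficients $\Gamma$, which is where all the analytic content of the theorem actually lives and which the paper never states; and (ii) the finiteness of $m_\pm(\Sigma_2)$, i.e.\ $\psi^{\#}(\Sigma_2)(u_\pm)\neq 0$ in $I'(L_2)$, without which ``the largest integer'' is undefined --- though since $\Sigma_2$ is itself a connected cobordism from the unknot, this is already contained in the well-definedness of $s^{\#}_\pm(L_2)$ that you invoke. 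With those two citations made explicit, your proof is complete for embedded cobordisms, which is all the paper claims (Gong's original statement also covers immersed cobordisms, which your argument does not address).
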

As a special case, we obtain the following crossing change inequality.
\begin{cor}(Crossing change inequality).\label{crossing change}
Let $L_-$ be a link and $L_+$ be the link obtained by changing one crossing from positive to negative.
Then 
\[
|s^{\#}(L_+)-s^{\#}(L_-)|\leq 2
\]
holds.
\end{cor}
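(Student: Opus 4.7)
The plan is to realize the crossing change by an explicit cobordism of Euler characteristic $-2$ that preserves the number of link components, and then apply Gong's cobordism inequality (\cref{cobordism}) in both directions.

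First I would construct a cobordism $\Sigma\colon L_- \to L_+$ as follows. Inside a small 4-ball neighborhood of the crossing to be changed, realize the transition from $L_-$ to $L_+$ by two successive band (saddle) moves: first perform the oriented resolution of the positive crossing of $L_-$ to obtain an intermediate link $L_0$, then attach a second band to $L_0$ that produces the negative crossing diagram of $L_+$. Outside this ball, let $\Sigma$ be the product cobordism on $L_-$. Each saddle attaches a $1$-handle to the cobordism and therefore contributes $-1$ to the Euler characteristic, giving $\chi(\Sigma) = -2$.

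Next I would verify the hypotheses of \cref{cobordism}. Since the modification is supported in a small ball while the remainder of $\Sigma$ is a product cylinder on $L_-$, every connected component of $\Sigma$ meets $L_- \times \{0\}$, hence has nontrivial boundary in $L_-$. Moreover, a crossing change does not alter the number of link components, so $|L_-| = |L_+|$. Applying \cref{cobordism} then yields
\[
s^\#(L_+) - s^\#(L_-) \leq -\chi(\Sigma) + |L_-| - |L_+| = 2.
\]
The mirror construction (reading the same local modification from the $L_+$ side to the $L_-$ side) produces a cobordism $\Sigma'\colon L_+ \to L_-$ with the same properties, which gives $s^\#(L_-) - s^\#(L_+) \leq 2$ by the same argument. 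Combining the two estimates proves the corollary.

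I do not anticipate any serious obstacle: this is the standard slice-genus style argument based on a cobordism inequality. The only point requiring some care is checking the "nontrivial boundary on each component" hypothesis of \cref{cobordism}, which is immediate from the local nature of the double-saddle construction.
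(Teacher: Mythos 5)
Your proof is correct and follows essentially the same route as the paper: realize the crossing change by a cobordism with $\chi=-2$ supported in a ball (so the component and boundary hypotheses of \cref{cobordism} are immediate), apply Gong's inequality, and then run the reversed cobordism to get the other direction. The paper simply cites Livingston and Lewark for the existence of this genus-one cobordism rather than spelling out the two-saddle construction as you do.
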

\begin{proof}
This follows from Gong's cobordism inequality in the same way as that of \cite{Liv04}, \cite[Proposition 1.6]{LewPhD} .
Notice that crossing change operation does not change the number of components; 
\[
|L_-|=|L_+|.
\]
The crossing change provides a genus 1 cobordism $\Sigma$ from $L_+$ to $L_-$ whose Euler characteristic is of course $\chi(\Sigma)=-2$.
This gives
\[
s^{\#}(L_-)-s^{\#}(L_+)\leq 2.
\]
Notice that $\Sigma$ with the same orientation can be regarded as a cobordism between mirrors $L^\dagger_-\to L^\dagger_+$, but this gives no new information.
By reversing the orientation of $\Sigma$, we obtain 
a genus 1 cobordism $-\Sigma: L_- \to L_+$ whose Euler characteristic is of also $-2$.
This gives
\[
s^{\#}(L_+)-s^{\#}(L_-)\leq 2.
\]
These inequalities give the desired result.
\end{proof}

\section{Proof of the main theorem}
Let us prove the Bennequin--Plamenevskaya--Shumakovitch type inequalities for $s^{\#}$, \cref{main}.
The following arguments are well-known for slice-torus invariants \cite{Shu, Kaw1, Kaw2, Liv04, LewPhD, Lew, Lobb, Abe, BS} (Even though $s^\#$ is not a slice-torus invariant, the argument works).
\begin{proof}
By the standard technique of transverse push-off, the inequality for Legendrian knots follows from the one for transverse knots.
See \cite[Lemma 2.22]{Etn}, for example.
Thus, it is enough to prove the inequality for transverse knots.
We have the transverse analogue of Alexander's theorem and Markov's theorem on the correspondence of knots and braids (See \cite[Theorem 2.10 and Theorem 2.11]{Etn}). Under this correspondance, for a transverse knot $\hat{\beta}$ which is given as the closure of a braid $\beta$, we have Bennequin's formula 
\[
sl(\hat{\beta})= x_+-x_--n,
\]
 where we asummed that $\beta$ has $n$ strands, $x_+$ positive crossings, and $ x_-$ negative crossings.
The inequality for transverse knots follows from  the proposition below for braids.
\end{proof}

\begin{prop}
Let $\beta$ be a braid with $n$ strands, $x_+$ positive crossings, and $x_-$ negative crossings. Suppose its braid closure $\hat{\beta}$ is a knot. 
Then 
\[
s^{\#}(\hat{\beta})\geq x_+-x_--n
\]
holds.
\end{prop}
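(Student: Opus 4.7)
The plan is two-step, following the pattern well known for slice-torus invariants. I would first reduce to positive braids by applying \cref{crossing change} to each negative crossing, then use the Garside structure of $B_n^+$ to cobord a positive-braid closure to a torus knot and conclude via \cref{cobordism} together with the torus-knot computation \eqref{torus}.

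For the reduction, let $\beta^+$ be the positive braid on $n$ strands with $c := x_+ + x_-$ crossings obtained from $\beta$ by switching each of the $x_-$ negative crossings to a positive one. A crossing change preserves the underlying permutation of a braid, so the hypothesis that $\hat{\beta}$ is a knot forces every intermediate closure --- in particular $\hat{\beta^+}$ --- to be a knot. Iterating \cref{crossing change} along the $x_-$ crossing changes yields
\[
s^{\#}(\hat{\beta}) \;\geq\; s^{\#}(\hat{\beta^+}) - 2 x_-,
\]
reducing the claim to $s^{\#}(\hat{\beta^+}) \geq c - n$.

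For the positive-braid bound, write $\delta = \sigma_1 \sigma_2 \cdots \sigma_{n-1}$, so the central full twist is $\Delta^2 = \delta^n$. By Garside theory, $\beta^+$ left-divides $\Delta^{2k}$ for some $k \geq 1$, producing a positive word $w$ with $\beta^+ \cdot w = \delta^{nk}$ and $|w| = n(n-1)k - c$. Multiplying further by $\delta$ reaches $\delta^{nk+1}$, whose closure is the torus knot $T_{n,nk+1}$ (since $\gcd(n,nk+1) = 1$). Appending a single positive generator at the end of a braid is realized by a locally supported band move in $[0,1]\times S^3$, so concatenating these gives an embedded, connected cobordism $\Sigma:\hat{\beta^+}\to T_{n,nk+1}$ with
\[
\chi(\Sigma) \;=\; -\bigl(|w|+(n-1)\bigr) \;=\; -n(n-1)k + c - (n-1).
\]
Applying \cref{cobordism} (both endpoints are knots, so the $|L_i|$ contributions cancel) and inserting $s^{\#}(T_{n,nk+1}) = n(n-1)k - 1$ from \eqref{torus} yields
\[
s^{\#}(\hat{\beta^+}) \;\geq\; n(n-1)k - 1 + \chi(\Sigma) \;=\; c - n,
\]
and combining with the reduction produces $s^{\#}(\hat{\beta}) \geq c - n - 2 x_- = x_+ - x_- - n$.

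The only delicate point I foresee is verifying that the assembled cobordism $\Sigma$ meets the hypotheses of \cref{cobordism}, namely that it is embedded and that every connected component has non-trivial boundary in $\hat{\beta^+}$. Each elementary band is supported in a small neighborhood of the arc where a generator is appended, so embeddedness is preserved; and $\Sigma$ is connected because it is built from the cylinder $\hat{\beta^+} \times I$ by $1$-handle attachments, which never create isolated components. The sole algebraic input — that every positive braid left-divides some $\Delta^{2k}$, together with the identity $\Delta^2 = \delta^n$ — is standard Garside theory.
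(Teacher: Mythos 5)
Your proof is correct, and its overall architecture is the same as the paper's: reduce to positive braids by iterating \cref{crossing change} (losing $2x_-$), then cobord the positive-braid closure to a torus knot and apply \cref{cobordism} together with \eqref{torus}. The one place you genuinely diverge is in how the cobordism to a torus knot is produced. The paper simply invokes the Livingston--Lewark construction: for any $l \geq x_+$ with $\gcd(n,l)=1$ one completes each letter of the positive word to a full copy of $\delta=\sigma_1\cdots\sigma_{n-1}$ and appends further copies, inserting $ln-l-x_+$ positive generators anywhere in the word, each realized by a band move; this gives $\Sigma:\hat{\beta}\to T(n,l)$ with $\chi(\Sigma)=x_++l-ln$ and the same arithmetic $s^\#(\hat\beta)\geq (n-1)(l-1)-1+\chi(\Sigma)=x_+-n$. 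You instead use Garside theory --- left-divisibility of $\Delta^{2k}=\delta^{nk}$ --- to append a positive word only at the end, landing on $T(n,nk+1)$. Both are valid and give identical Euler-characteristic bookkeeping; your version is self-contained but imports the (standard, slightly heavier) fact that every positive braid left-divides a power of the full twist, while the paper's cited version is more elementary and more flexible in the choice of $l$. Your checks of the hypotheses of \cref{cobordism} (connectedness, embeddedness, both ends being knots so $|L_1|-|L_2|=0$) and the observation that crossing changes preserve the permutation, hence the knottedness of $\hat{\beta^+}$, are all correct and match what the paper leaves implicit.
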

\begin{proof}
We follow the arguments in Livingston's paper \cite{Liv04} and Lewark's thesis \cite{LewPhD}.
First, we will prove the inequality for positive braids, in other words, for the case with $x_-=0$ .

Take $l \geq  x_+$ so that $(n, l)$=1.
As in  \cite{Liv04},  \cite[Lemma 1.5]{LewPhD}, we have a knot cobordism $\Sigma$ from $\hat{\beta}$ to
the torus knot $T_{n, l}$ with Euler characteristic $\chi(\Sigma)= x_++l-ln$.
Gong's cobordism inequality, \cref{cobordism} and her computations for torus knots \eqref{torus} give
\[
s^\#(\hat{\beta})\geq s^\#(T_{n, l})+\chi(\Sigma)
\]
\[
=(n-1)(l-1)-1+( x_++l-ln)
\]
\[
= x_+-n.
\]

This completes the proof for the case with $x_-=0$.
\par
The general case  follows from this and the crossing change inequality, \cref{crossing change}.
 Indeed, let  $\hat{\beta}^+$ be the braid obtained from $\hat{\beta}$ by changing all the negative crossing into positive crossings.
 Since $\hat{\beta}^+$ is a positive braid with $x_+ +x_-$ crossings, we have
 \[
 s^{\#}(\hat{\beta}^+)\geq x_++x_--n.
 \]
 On the other hand, by using crossing change inequality \cref{crossing change} $x_-$ times, we have
 \[
 s^{\#}(\hat{\beta}^+)- s^{\#}(\hat{\beta}) \leq 2 x_-.
 \]
 
 Thus we obtain
 \[
 s^{\#}(\hat{\beta}) \geq s^{\#}(\hat{\beta}^+)-2 x_- \geq x_+-x_--n,
 \]
 which is exactly the desired inequality.
 \end{proof}
\begin{rem}
 The authors of \cite{DISST} pointed out that as an application of the invariant $\tilde{s}$  they developed, we can give an alternative proof of \cref{main}.
 It is proved in \cite[Theorem 1.1]{DISST} that
 $2\tilde{s}$ is a slice-torus invariant in the sense of \cite{Lew}.
 By doing the same argument for $2\tilde{s}$ instead of $s^\#$ as in the proof of \cref{main}, we have
\[
sl(\mathcal{T}) \leq 2\tilde{s}(\mathcal{T})-1.
\]
Obtaining this type of inequality by such an argument for slice-torus invariants is well-known (See \cite[Theorem 6.1]{BS}\cite[Theorem 5]{Lew}, for example).
\cite[Theorem1.1]{DISST} shows
 \[
 |s^\#(K)-2\tilde{s}(K)| \leq 1.
 \]
 In particular we have
 \[
 2\tilde{s}(K) -s^\#(K)\leq 1.
 \]
 Thus we obtain 
\[
sl(\mathcal{T}) \leq 2\tilde{s}(\mathcal{T})-1 \leq s^\#(\mathcal{T}).
\]
\end{rem}

 \bibliographystyle{jplain}
\bibliography{Bennequin}
\end{document}